\newtheorem{theorem}{Theorem}[section]
\newtheorem{lemma}[theorem]{Lemma}
\newtheorem{corollary}[theorem]{Corollary}
\theoremstyle{definition}
\theoremstyle{remark}
\newtheorem{remark}[theorem]{Remark}
\numberwithin{equation}{section}
\begin{document}

\title[Bessel type inequalities]{Bessel type inequalities in Hilbert $C^*$-modules}

\author[S.S. Dragomir, M. Khosravi, M.S. Moslehian]{S. S. Dragomir$^1$, M. Khosravi$^2$ and M. S. Moslehian$^3$}
\address{$^1$ Research Group in Mathematical Inequalities and Applications, School of  Engineering \& Science, Victoria University, P.
O. Box 14428, Melbourne city, Vic, 8001, Australia.}
\email{sever.dragomir@vu.edu.au}
\address{$^2$ Department of Mathematics, Tehran Teacher Training University,
P. O. Box 15618, Tahran, Iran\newline Banach Mathematical Research
Group (BMRG), Mashhad, Iran.} \email{khosravi$_-$m@saba.tmu.ac.ir}
\address{$^3$ Department of Mathematics, Ferdowsi University of Mashhad, P. O. Box
1159, Mashhad 91775, Iran;
\newline Center of Excellence in Analysis on
Algebraic Structures (CEAAS), Ferdowsi University of Mashhad, Iran.}
\email{moslehian@ferdowsi.um.ac.ir and moslehian@ams.org}

\begin{abstract}
Regarding the generalizations of the Bessel inequality in Hilbert
spaces which are due to Bombiari and Boas--Bellman, we obtain a
version of the Bessel inequality and some generalizations of this
inequality in the framework of Hilbert $C^*$-modules.
\end{abstract}
\subjclass[2000] {Primary 46L08; Secondary 46L05, 47A63, 47B10,
47A30.} \keywords{Bessel inequality; Boas--Bellman inequality;
Hilbert $C^*$-module, $C^*$-algebra.}

\maketitle


\section{Introduction}

The Bessel inequality states that if $(e_i)_{1\leq i\leq n}$ is an
orthonormal sequence of vectors in a Hilbert space $({\mathscr
H};\langle\cdot,\cdot\rangle)$, then
$$\sum_{i=1}^n\left|\langle x,e_i\rangle\right|^2\leq\|x\|^2 \qquad (x\in {\mathscr H})\,.$$
\noindent A number of mathematicians have investigated the above
inequality in various settings. One of the generalizations of the
Bessel inequality was given by Bombieri \cite{bom} as follows.
\begin{theorem}
If $x, y_1,\cdots,y_n$ are elements of a complex unitary space, then
$$\sum_{i=1}^n \left|\langle x,y_i\rangle\right|^2\leq\|x\|^2\max_{1\leq i\leq n}
\sum_{j=1}^n|\langle y_i,y_j\rangle|.$$
\end{theorem}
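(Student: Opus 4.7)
The plan is to reduce the inequality to a single application of Cauchy--Schwarz followed by a symmetrization trick. Let $S:=\sum_{i=1}^{n}|\langle x,y_i\rangle|^{2}$ and $M:=\max_{1\le i\le n}\sum_{j=1}^{n}|\langle y_i,y_j\rangle|$. The key idea is to test $x$ against a cleverly chosen linear combination of the $y_i$. Specifically, I would set
$$z:=\sum_{i=1}^{n}\overline{\langle x,y_i\rangle}\,y_i$$
so that, by the definition of the inner product, $\langle x,z\rangle=\sum_{i=1}^{n}|\langle x,y_i\rangle|^{2}=S$.

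Then I would apply the ordinary Cauchy--Schwarz inequality to the pair $(x,z)$, giving $S^{2}\le \|x\|^{2}\|z\|^{2}$. Expanding $\|z\|^{2}=\langle z,z\rangle$ produces the double sum
$$\|z\|^{2}=\sum_{i,j=1}^{n}\overline{\langle x,y_i\rangle}\,\langle x,y_j\rangle\,\langle y_i,y_j\rangle,$$
and taking absolute values yields
$$\|z\|^{2}\le \sum_{i,j=1}^{n}|\langle x,y_i\rangle|\,|\langle x,y_j\rangle|\,|\langle y_i,y_j\rangle|.$$

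The step I expect to be the crux is turning this symmetric double sum into something controlled by $M\cdot S$, since a naive Cauchy--Schwarz on the inner sum would produce $\bigl(\sum_j|\langle y_i,y_j\rangle|^{2}\bigr)^{1/2}$, which is the wrong quantity. The trick is to use $|\langle x,y_i\rangle|\,|\langle x,y_j\rangle|\le\tfrac12\bigl(|\langle x,y_i\rangle|^{2}+|\langle x,y_j\rangle|^{2}\bigr)$ together with the symmetry $|\langle y_i,y_j\rangle|=|\langle y_j,y_i\rangle|$. After swapping the roles of $i$ and $j$ in one of the two resulting sums, the bound collapses to
$$\|z\|^{2}\le \sum_{i=1}^{n}|\langle x,y_i\rangle|^{2}\sum_{j=1}^{n}|\langle y_i,y_j\rangle|\le M\cdot S.$$

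Combining this with $S^{2}\le \|x\|^{2}\|z\|^{2}$ gives $S^{2}\le \|x\|^{2}\,M\,S$. If $S=0$ the desired inequality is trivial; otherwise I divide through by $S$ to obtain $S\le \|x\|^{2}M$, which is precisely Bombieri's inequality.
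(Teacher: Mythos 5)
Your proof is correct. The paper states this scalar result without proof (citing Bombieri), but your argument is precisely the scalar version of the strategy the paper uses for its Hilbert $C^*$-module generalization (Theorem 3.3 together with Corollary 3.4): test $x$ against $z=\sum_{i} y_i\langle y_i,x\rangle$, apply Cauchy--Schwarz, and control the off-diagonal Gram terms via the symmetrization $2|a|\,|b|\le |a|^2+|b|^2$, whose operator analogue is the paper's Lemma 3.2.
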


\noindent In 1941, Boas \cite{boas} and in 1944, independently,
Bellman \cite{bellman} proved a result, which can be stated as
follows.
\begin{theorem}
If $x,y_1,\cdots,y_n$ are elements of a Hilbert space, then
\begin{eqnarray*}
\sum_{i=1}^n|\langle x,y_i\rangle|^2\leq\|x\|^2\left[\max_{1\leq
i\leq n}\|y_i\|^2+\left( \sum_{1\leq i\neq j\leq n}|\langle
y_i,y_j\rangle|^2\right)^{1/2}\right].
\end{eqnarray*}
\end{theorem}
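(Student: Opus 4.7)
The plan is to deduce the inequality from two applications of the Cauchy--Schwarz inequality: one for the inner product with $x$, and a second one that controls the off-diagonal cross terms arising when we expand a norm-squared. Set $c_i=\langle x,y_i\rangle$ and introduce the auxiliary vector $z=\sum_{i=1}^n \overline{c_i}\,y_i$. A direct computation gives $\langle x,z\rangle=\sum_{i=1}^n|c_i|^2$, so the scalar Cauchy--Schwarz inequality yields
\[
\Bigl(\sum_{i=1}^n|c_i|^2\Bigr)^{\!2} =|\langle x,z\rangle|^2 \le \|x\|^2\,\|z\|^2 =\|x\|^2\sum_{i,j=1}^n\overline{c_i}c_j\langle y_i,y_j\rangle.
\]

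The next step is to split the double sum on the right into its diagonal and off-diagonal contributions. The diagonal piece equals $\sum_{i=1}^n|c_i|^2\|y_i\|^2$, which is trivially dominated by $\bigl(\max_{1\le i\le n}\|y_i\|^2\bigr)\sum_{i=1}^n|c_i|^2$. For the off-diagonal piece I apply the Cauchy--Schwarz inequality in $\ell^2$ indexed by ordered pairs $(i,j)$ with $i\neq j$:
\[
\Bigl|\sum_{i\ne j}\overline{c_i}c_j\langle y_i,y_j\rangle\Bigr| \le \Bigl(\sum_{i\ne j}|c_i|^2|c_j|^2\Bigr)^{\!1/2}\Bigl(\sum_{i\ne j}|\langle y_i,y_j\rangle|^2\Bigr)^{\!1/2},
\]
and then bound the first factor by the elementary estimate $\sum_{i\ne j}|c_i|^2|c_j|^2\le\bigl(\sum_{i=1}^n|c_i|^2\bigr)^{2}$, which pulls out an extra factor of $\sum_{i=1}^n|c_i|^2$.

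Putting the two bounds together, the right-hand side of the initial Cauchy--Schwarz estimate is at most
\[
\|x\|^2\Bigl(\sum_{i=1}^n|c_i|^2\Bigr)\!\left[\max_{1\le i\le n}\|y_i\|^2+\Bigl(\sum_{i\ne j}|\langle y_i,y_j\rangle|^2\Bigr)^{\!1/2}\right],
\]
so that after dividing by $\sum_{i=1}^n|c_i|^2$ (the case where this quantity is zero being trivial) the statement follows. The main delicacy is ensuring that the off-diagonal Cauchy--Schwarz is indexed exactly over $\{(i,j):i\ne j\}$, so that the summand $\sum_{1\le i\ne j\le n}|\langle y_i,y_j\rangle|^2$ appearing in the theorem is reproduced verbatim rather than the symmetric variant $2\sum_{i<j}|\langle y_i,y_j\rangle|^2$; every other step is routine.
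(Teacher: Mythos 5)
Your proof is correct, and it is essentially the argument the paper itself uses: the paper states this classical Boas--Bellman inequality without proof, but its proof of the Hilbert $C^*$-module generalization (Theorem 3.8 together with Corollary 3.9, where $a_i=\langle x,y_i\rangle$) is exactly your scheme --- Cauchy--Schwarz against the auxiliary vector $\sum_i y_i\overline{\langle x,y_i\rangle}$, a diagonal/off-diagonal split of the expanded norm, a second Cauchy--Schwarz over the ordered pairs $i\neq j$, and the bound $\sum_{i\neq j}|c_i|^2|c_j|^2\leq\bigl(\sum_i|c_i|^2\bigr)^2$ --- specialized to $\mathscr{A}=\mathbb{C}$.
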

\noindent Recently, Mitrinovi\'{c}, Pe\v{c}ari\'{c} and Fink
\cite{pec} proved the following inequality and showed that it is
equivalent with the Boas--Bellman theorem.
\begin{theorem}
If $x,y_1,\cdots,y_n$ are elements of a Hilbert space and
$c_1,\cdots,c_n$ are arbitrary complex numbers, then
$$|\sum_{i=1}^nc_i\langle x,y_i\rangle|^2\leq\|x\|^2\sum_{i=1}^n|c_i|^2
\left[\max_{1\leq i\leq n}\|y_i\|^2+ \left(\sum_{1\leq i\neq j\leq
n}|\langle y_i,y_j\rangle|^2\right)^{1/2}\right].$$
\end{theorem}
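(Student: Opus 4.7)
The natural plan is to linearize the left-hand side by introducing the auxiliary vector $z=\sum_{i=1}^n c_i y_i$ so that $\sum_{i=1}^n c_i \langle x,y_i\rangle = \langle x,z\rangle$. A single application of the classical Schwarz inequality then gives
\[
\left|\sum_{i=1}^n c_i\langle x,y_i\rangle\right|^2 \le \|x\|^2\,\|z\|^2,
\]
and the whole problem reduces to estimating $\|z\|^2$ by the bracketed quantity times $\sum_{i=1}^n|c_i|^2$.

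To bound $\|z\|^2$, I would expand
\[
\|z\|^2 = \sum_{i,j=1}^n c_i\overline{c_j}\langle y_i,y_j\rangle = \sum_{i=1}^n |c_i|^2\|y_i\|^2 + \sum_{1\le i\neq j\le n} c_i\overline{c_j}\langle y_i,y_j\rangle
\]
and treat the diagonal and off-diagonal pieces separately. The diagonal is immediate: $\sum_{i=1}^n|c_i|^2\|y_i\|^2\le \max_{1\le i\le n}\|y_i\|^2\cdot\sum_{i=1}^n|c_i|^2$.

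For the off-diagonal piece I would apply the discrete Cauchy--Schwarz inequality to the double sum, treating $(c_i\overline{c_j})_{i\neq j}$ and $(\langle y_i,y_j\rangle)_{i\neq j}$ as the two families, which yields
\[
\left|\sum_{1\le i\neq j\le n} c_i\overline{c_j}\langle y_i,y_j\rangle\right|
\le \left(\sum_{1\le i\neq j\le n}|c_i|^2|c_j|^2\right)^{1/2}\!\left(\sum_{1\le i\neq j\le n}|\langle y_i,y_j\rangle|^2\right)^{1/2}.
\]
The key simple observation is that $\sum_{i\neq j}|c_i|^2|c_j|^2 \le \sum_{i,j}|c_i|^2|c_j|^2 = \bigl(\sum_i|c_i|^2\bigr)^2$, so the first factor is at most $\sum_{i=1}^n|c_i|^2$.

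Putting the two estimates together gives exactly
\[
\|z\|^2 \le \sum_{i=1}^n|c_i|^2\left[\max_{1\le i\le n}\|y_i\|^2 + \left(\sum_{1\le i\neq j\le n}|\langle y_i,y_j\rangle|^2\right)^{1/2}\right],
\]
which combined with the first Schwarz estimate completes the proof. There is no real obstacle here; the only subtlety is choosing the right pairing of factors when applying Cauchy--Schwarz to the off-diagonal sum, which is what makes the $\ell^2$-norm of the cross inner products $\langle y_i,y_j\rangle$ appear rather than, say, the $\ell^1$- or $\ell^\infty$-norm that figures in Bombieri's variant.
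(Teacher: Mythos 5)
Your proof is correct and follows essentially the same route as the paper: the paper does not reprove this classical Hilbert-space statement but establishes its Hilbert $C^*$-module analogue (inequality (\ref{pecaric})) by exactly your scheme --- one Schwarz step to reduce to $\bigl\|\sum_i y_i a_i^*\bigr\|^2$, then expansion into diagonal and off-diagonal parts, with the diagonal bounded by $\max_i\|y_i\|^2\sum_i\|a_i\|^2$ and the off-diagonal by the discrete Cauchy--Schwarz inequality together with $\bigl(\sum_{i\neq j}\|a_i\|^2\|a_j\|^2\bigr)^{1/2}\le\sum_i\|a_i\|^2$. No issues to report.
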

\noindent In addition, Dragomir \cite{drabook} obtained some other
generalizations of the Bessel inequality, which are similar to
Boas--Bellman inequality and Mitrinovi\'{c}--Pe\v{c}ari\'{c}--Fink
inequality.

Our aim is to extend some of these generalizations of the Bessel
inequality in the framework of Hilbert $C^*$-modules. A related
concept to our work is the notion of frame. We would like to refer
the interested reader to \cite{F-L} for an extensive account on
frames in Hilbert $C^*$-modules.

\section{Preliminaries}
In this section we recall some fundamental definitions in the theory
of Hilbert modules that will be used in the sequel.

\noindent Suppose that $\mathscr{A}$ is a $C^*$-algebra and
$\mathscr{X}$ is a linear space which is an algebraic right
$\mathscr{A}$-module satisfying $\lambda(xa)=x(\lambda a)=(\lambda
x)a$ for all $x \in {\mathscr X},a \in {\mathscr A}, \lambda \in
{\mathbb C}$. The space $\mathscr{X}$ is called a
\textit{pre-Hilbert $\mathscr{A}$-module} (or an \textit{inner
product $\mathscr{A}$-module}) if there exists an
\textit{$\mathscr{A}$-valued inner product}
$\langle\cdot,\cdot\rangle:\mathscr{X}\times \mathscr{X}\to
\mathscr{A}$ satisfying the following properties:\\
(i) $\langle x,x\rangle\geq0$ and $\langle x,x\rangle=0$ if and
only if $x=0$;\\
(ii) $\langle x,\lambda y+z\rangle=\lambda\langle
x,y\rangle+\langle x,z\rangle$;\\
(iii) $\langle x,ya\rangle=\langle x,y\rangle a$;\\
(iv) $\langle x,y\rangle^*=\langle y,x\rangle$;\\
for all $x,y,z\in \mathscr{X}$, $a\in \mathscr{A}$ and
$\lambda\in\mathbb{C}$.

\noindent By (ii) and (iv), $\langle \lambda x+ya, z\rangle=
\bar\lambda \langle x, z\rangle + a^*\langle y, z\rangle$. It
follows from the \textit{Cauchy--Schwarz inequality} $\langle
y,x\rangle\langle x,y\rangle\leq\|\langle x,x\rangle\| \langle
y,y\rangle$ that $\|x\|=\|\langle x,x\rangle\|^{\frac{1}{2}}$ is a
norm on $\mathscr{X}$, where the latter norm denotes that in the
$C^*$-algebra ${\mathscr A}$; see \cite[p. 5]{lance}. This norm
makes $\mathscr{X}$ into a right normed module over $\mathscr{A}$. A
pre-Hilbert module $X$ is called a \textit{Hilbert
$\mathscr{A}$-module} if it is complete with respect to its norm.

\noindent Two typical examples of Hilbert $C^*$-modules are as
follows:

(I) Every Hilbert space is a Hilbert ${\mathbb C}$-module.

(II) Every $C^*$-algebra ${\mathscr A}$ is a Hilbert ${\mathscr
A}$-module via $\langle a, b\rangle = a^*b \quad (a, b \in {\mathscr
A})$.

\noindent Notice that the inner product structure of a $C^*$-algebra
is essentially more complicated than complex numbers. For instance,
the concepts such as adjoint, orthogonality and theorems such as
Riesz' representation in the theory of complex Hilbert spaces cannot
simply be generalized or transferred to that of Hilbert
$C^*$-modules.

\noindent One may define an ``\textit{$\mathscr{A}$-valued norm}''
$|.|$ by $|x|=\langle x,x\rangle^{1/2}$. Clearly,
$\|\,|x|\,\|=\|x\|$ for each $x\in \mathscr{X}$. It is known that
$|.|$ does not satisfy the triangle inequality in general. See
\cite{lance, M-T} for more detailed information on Hilbert
$C^*$-modules.

\section{main results}

We start our work by presenting a version of the Bessel inequality
for Hilbert $C^*$-modules.

\begin{theorem}
Let $\mathscr{X}$ be a Hilbert $\mathscr{A}$-module and
$e_1,e_2,\cdots, e_n$ be a sequence of unit vectors in $X$ such that
$\langle e_i,e_j\rangle=0$ for $1 \leq i\neq j \leq n$. If $x\in X$,
then
\begin{eqnarray}\label{bessel}\sum_{i=1}^n|\langle
e_i,x\rangle|^2 \leq |x|^2.\end{eqnarray}
\end{theorem}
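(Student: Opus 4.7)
The plan is to mimic the classical Hilbert space argument: form the ``residual'' $y = x - \sum_{i=1}^n e_i \langle e_i, x \rangle$, which is an element of $\mathscr{X}$, and exploit the positivity $\langle y, y \rangle \geq 0$ in the $C^*$-algebra $\mathscr{A}$. The identity $\langle y, y \rangle = \langle x, x \rangle - \sum_{i=1}^n |\langle e_i, x \rangle|^2$ would then immediately yield (\ref{bessel}) in the order of $\mathscr{A}$, from which the inequality follows.

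To carry this out, I would first expand $\langle y, y \rangle$ using sesquilinearity, i.e.\ the identity $\langle \lambda u + va, w\rangle = \bar{\lambda}\langle u, w\rangle + a^* \langle v, w\rangle$ derived from (ii) and (iv), together with the $\mathscr{A}$-linearity in the second slot (iii). This produces four pieces: the main term $\langle x, x \rangle$, two mixed cross terms, and a double sum $\sum_{i,j} \langle e_i, x \rangle^* \langle e_i, e_j \rangle \langle e_j, x \rangle$. The two cross terms are each $\sum_i \langle x, e_i\rangle \langle e_i, x\rangle = \sum_i |\langle e_i, x\rangle|^2$ (noting that they are adjoints of each other and self-adjoint, since $a^*a$ is self-adjoint).

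Next, I would apply the orthogonality hypothesis $\langle e_i, e_j \rangle = 0$ for $i \neq j$ to collapse the double sum to its diagonal $\sum_i \langle e_i, x\rangle^* \langle e_i, e_i\rangle \langle e_i, x\rangle$, and then invoke the hypothesis that each $e_i$ is a unit vector, interpreted as $\langle e_i, e_i\rangle = 1_{\mathscr{A}}$, to simplify this to $\sum_i \langle e_i, x\rangle^* \langle e_i, x\rangle = \sum_i |\langle e_i, x\rangle|^2$. Combining, the cross-terms and the diagonal sum produce a net contribution of $-\sum_i |\langle e_i, x \rangle|^2$, leaving $\langle y, y \rangle = |x|^2 - \sum_{i=1}^n |\langle e_i, x \rangle|^2 \geq 0$.

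The main delicacy is bookkeeping with the noncommutative multiplication in $\mathscr{A}$: one must track the order of the factors carefully, since the convention of the paper places conjugate-linearity on the first slot and linearity on the second, forcing $|\langle e_i, x\rangle|^2$ to be read as $\langle e_i, x\rangle^* \langle e_i, x\rangle$ rather than the reverse. Beyond that, the argument is essentially algebraic manipulation, closely parallel to the familiar Hilbert space computation.
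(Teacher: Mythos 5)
Your proposal follows essentially the same route as the paper: form the residual $y=x-\sum_{i=1}^n e_i\langle e_i,x\rangle$, expand $\langle y,y\rangle\geq 0$, use orthogonality to kill the off-diagonal terms, and identify the two cross terms as $-2\sum_i|\langle e_i,x\rangle|^2$. The one point where you diverge is the treatment of the diagonal term $\sum_i\langle e_i,x\rangle^*\langle e_i,e_i\rangle\langle e_i,x\rangle$: you read ``unit vector'' as $\langle e_i,e_i\rangle=1_{\mathscr A}$, which is a strictly stronger hypothesis than the standard one $\|e_i\|=\|\langle e_i,e_i\rangle\|^{1/2}=1$ (and implicitly requires $\mathscr A$ to be unital). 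The paper only assumes $\|e_i\|=1$ and therefore cannot simplify the diagonal term by equality; instead it uses the operator inequality $b^*cb\leq\|c\|\,b^*b$ for $c=\langle e_i,e_i\rangle\geq 0$, giving $\langle e_i,x\rangle^*\langle e_i,e_i\rangle\langle e_i,x\rangle\leq\langle e_i,x\rangle^*\langle e_i,x\rangle$, which still yields $0\leq|x|^2-\sum_i|\langle e_i,x\rangle|^2$. Your argument is valid under your stronger reading, but to prove the theorem as stated (with the usual meaning of unit vector) you should replace the equality at that step by this one-line estimate; everything else goes through unchanged.
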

\begin{proof}
The result follows from the following inequalities.
\begin{eqnarray*}
0\leq \left|x-\sum\limits_{i=1}^n e_i\langle
e_i,x\rangle\right|^2&=&\left\langle x-\sum\limits_{i=1}^n
e_i\langle
e_i,x\rangle ,x-\sum\limits_{i=1}^n e_i\langle e_i,x\rangle \right\rangle\\
&=&\langle x,x\rangle+\sum\limits_{i=1}^n\sum\limits_{j=1}^n\langle
e_i,x\rangle^*\langle
e_i,e_j\rangle\langle e_j,x\rangle-2\sum\limits_{i=1}^n|\langle e_i,x\rangle|^2\\
&=&\langle x,x\rangle+\sum\limits_{i=1}^n\langle
e_i,x\rangle^*\langle
e_i,e_i\rangle\langle e_i,x\rangle-2\sum\limits_{i=1}^n|\langle e_i,x\rangle|^2\\
&\leq& |x|^2+\sum\limits_{i=1}^n\langle e_i,x\rangle^*\langle
e_i,x\rangle-2\sum\limits_{i=1}^n|\langle
e_i,x\rangle|^2\\
&=&|x|^2-\sum\limits_{i=1}^n|\langle e_i,x\rangle|^2.
\end{eqnarray*}
\end{proof}

The following lemma is useful to prove a Bombieri type inequality.
\begin{lemma}\label{8}
Let ${\mathscr A}$ be a $C^*$-algebra and $a,b,c \in {\mathscr A}$.
Then
$$a^*cb+b^*c^*a\leq\|c\|(|a|^2+|b|^2).$$
\end{lemma}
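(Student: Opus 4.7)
The plan is to extract the estimate from the positivity of a single element of $\mathscr{A}$. For a real parameter $\epsilon>0$ (to be chosen later), the element $a-\epsilon cb$ has positive absolute square, so
$$0\leq (a-\epsilon cb)^{*}(a-\epsilon cb)=a^{*}a-\epsilon\bigl(a^{*}cb+b^{*}c^{*}a\bigr)+\epsilon^{2}b^{*}c^{*}cb.$$
Rearranging,
$$\epsilon\bigl(a^{*}cb+b^{*}c^{*}a\bigr)\leq |a|^{2}+\epsilon^{2}\,b^{*}c^{*}cb.$$

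Next, I would control the term $b^{*}c^{*}cb$ using the standard $C^*$-inequality $c^{*}c\leq\|c\|^{2}\cdot 1$, which is valid in the unitization of $\mathscr{A}$ because $c^{*}c$ is positive with spectrum contained in $[0,\|c\|^{2}]$. Conjugating by $b$ preserves order, and the resulting inequality
$$b^{*}c^{*}cb\leq\|c\|^{2}\,b^{*}b=\|c\|^{2}\,|b|^{2}$$
lives back in $\mathscr{A}$ itself. Substituting this back produces
$$\epsilon\bigl(a^{*}cb+b^{*}c^{*}a\bigr)\leq |a|^{2}+\epsilon^{2}\|c\|^{2}|b|^{2}.$$

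To finish, I would balance the two terms on the right by choosing $\epsilon=1/\|c\|$ (assuming $c\neq 0$; when $c=0$ both sides of the desired inequality vanish), which immediately yields $a^{*}cb+b^{*}c^{*}a\leq\|c\|\bigl(|a|^{2}+|b|^{2}\bigr)$. The only mildly delicate point is the appeal to the unitization when invoking $c^{*}c\leq\|c\|^{2}\cdot 1$, but since the bound is used only after being sandwiched by $b^{*}$ and $b$, no element outside $\mathscr{A}$ ever appears in the final chain of inequalities.
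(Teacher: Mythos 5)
Your proof is correct, and it takes a genuinely different route from the paper's. The paper passes to the universal representation of $\mathscr{A}$, writes the middle element via its polar decomposition $R=U|R|$ (the partial isometry $U$ generally lives only in the enveloping von Neumann algebra, which is why the representation step is needed there), and expands the positive element $(S-U^*T)^*|R|(S-U^*T)$ to isolate the cross terms, finishing with $\bigl\||R|^{1/2}\bigr\|^2=\|R\|$. You instead stay inside $\mathscr{A}$ (up to the harmless use of the unitization for $c^*c\leq\|c\|^2\cdot 1$), expand the positive element $(a-\epsilon cb)^*(a-\epsilon cb)$, and optimize the scalar $\epsilon=1/\|c\|$ — the $C^*$-algebraic version of the elementary estimate $2xy\leq \epsilon x^2+\epsilon^{-1}y^2$. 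Your computation checks out: the cross terms are exactly $-\epsilon(a^*cb+b^*c^*a)$, the bound $b^*c^*cb\leq\|c\|^2|b|^2$ follows from order-preservation under conjugation, and the degenerate case $c=0$ is handled. What your approach buys is the avoidance of both the universal representation and the polar decomposition, making the argument entirely algebra-internal and arguably more elementary; what the paper's approach buys is a slightly stronger intermediate inequality, namely $T^*RS+S^*R^*T\leq\bigl|\,|R|^{1/2}U^*T\bigr|^2+\bigl|\,|R|^{1/2}S\bigr|^2$, before the norm estimates are applied.
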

\begin{proof}
Using the universal representation of ${\mathscr A}$ it is
sufficient to prove that $$T^*RS+S^*R^*T\leq\|R\|(|T|^2+|S|^2)$$ for
bounded linear operators $T,R,S$ acting on a Hilbert space
${\mathscr H}$.

By the polar decomposition, there exists a partial isometry $U\in
{\mathbb B}(\mathscr {H})$ such that $R=U|R|$. It follows from
\begin{eqnarray*}
0&\leq& (S-U^*T)^*|R|(S-U^*T)\\
&=&S^*|R|S-S^*|R|U^*T-T^*U|R|S+ T^*U|R|U^*T,
\end{eqnarray*}
that
$$T^*RS+S^*R^*T\leq |\,|R|^{1/2}U^*T|^2+|\,|R|^{1/2}S|^2.$$ Hence
\begin{eqnarray*}
T^*RS+S^*R^*T&\leq&
\left|\,|R|^{1/2}U^*T\right|^2+\left|\,|R|^{1/2}S\right|^2\\
&\leq&\left\|\,|R|^{1/2}U^*\right\|^2|T|^2+\left\|\,|R|^{1/2}\right\|^2\,|S|^2\\
&\leq&\|R\|(|T|^2+|S|^2).
\end{eqnarray*}
\end{proof}
\begin{theorem}\label{main}
If $y_1,\cdots, y_n$ are elements of a Hilbert $C^*$-module
$\mathscr{X}$, then
\begin{eqnarray}\label{eq}
\left|\sum_{i=1}^ny_ia_i\right|^2\leq \sum_{i=1}^n|a_i|^2\max_{1\leq
i\leq n}\sum_{j=1}^n \|\langle y_i,y_j\rangle\|,\end{eqnarray} for
all $a_1, \cdots, a_n \in \mathscr{A}$.
\end{theorem}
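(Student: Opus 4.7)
The plan is to expand the left-hand side, apply Lemma~\ref{8} to the cross terms, and then bound by the maximum of the row sums.

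First I would use the $\mathscr{A}$-valued inner product axioms (especially $\langle x,ya\rangle=\langle x,y\rangle a$ together with $\langle \lambda x+ya,z\rangle=\bar\lambda\langle x,z\rangle+a^*\langle y,z\rangle$) to compute
\[
\left|\sum_{i=1}^n y_ia_i\right|^2 = \Bigl\langle \sum_{i}y_ia_i,\sum_{j}y_ja_j\Bigr\rangle = \sum_{i=1}^n\sum_{j=1}^n a_i^*\langle y_i,y_j\rangle a_j.
\]
Next I split this double sum into the diagonal terms (with $i=j$) and the off-diagonal part, which I regroup by pairing $(i,j)$ with $(j,i)$ for $i<j$. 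On each such pair the summand equals $a_i^*\langle y_i,y_j\rangle a_j+a_j^*\langle y_i,y_j\rangle^* a_i$, which is exactly the shape to which Lemma~\ref{8} applies with $c=\langle y_i,y_j\rangle$, $a=a_i$, $b=a_j$. This yields
\[
a_i^*\langle y_i,y_j\rangle a_j+a_j^*\langle y_j,y_i\rangle a_i \le \|\langle y_i,y_j\rangle\|\bigl(|a_i|^2+|b_j|^2\bigr),
\]
and for the diagonal I use the trivial bound $a_i^*\langle y_i,y_i\rangle a_i\le\|\langle y_i,y_i\rangle\||a_i|^2$.

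Collecting coefficients of each $|a_i|^2$, the off-diagonal contributions give $\sum_{j\ne i}\|\langle y_i,y_j\rangle\|$ once I invoke $\|\langle y_j,y_i\rangle\|=\|\langle y_i,y_j\rangle^*\|=\|\langle y_i,y_j\rangle\|$. Combined with the diagonal term this produces
\[
\left|\sum_{i=1}^n y_ia_i\right|^2\le\sum_{i=1}^n |a_i|^2\sum_{j=1}^n\|\langle y_i,y_j\rangle\|.
\]
The proof concludes by bounding each inner sum by $\max_{1\le i\le n}\sum_{j=1}^n\|\langle y_i,y_j\rangle\|$, which is a scalar and therefore commutes out of the sum in the positive-cone order of $\mathscr{A}$.

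The only step that demands care is the bookkeeping when regrouping pairs and reassembling the coefficient of $|a_i|^2$; a priori one gets $\|\langle y_i,y_j\rangle\|$ from some pairs and $\|\langle y_j,y_i\rangle\|$ from others, and both must be identified to recognise the symmetric row sum. Apart from that, every inequality we chain is between self-adjoint elements of $\mathscr{A}$, so preservation of order under addition and multiplication by nonnegative scalars is automatic and no subtlety with Hilbert-module Cauchy--Schwarz is needed.
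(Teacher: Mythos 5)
Your proof is correct and follows essentially the same route as the paper: expand $\bigl|\sum_i y_ia_i\bigr|^2$ into the double sum $\sum_{i,j}a_i^*\langle y_i,y_j\rangle a_j$, bound each paired off-diagonal term via Lemma~\ref{8} and each diagonal term by $\|\langle y_i,y_i\rangle\|\,|a_i|^2$, then majorize the row sums by their maximum. (Only a typo to fix: $|b_j|^2$ should read $|a_j|^2$.)
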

\begin{proof}
By using Lemma \ref{8}, we have
\begin{eqnarray*}
\left|\sum\limits_{i=1}^ny_ia_i\right|^2&=&\left\langle
\sum\limits_{i=1}^n
 y_ia_i,\sum\limits_{i=1}^n
y_ia_i\right\rangle=\sum\limits_{1\leq i,j\leq n} a_i^*\langle y_i,y_j\rangle a_j\\
&=&\sum\limits_{i=1}^n a_i^*\langle y_i,y_i\rangle
a_i+\sum\limits_{1\leq i\neq j\leq n} a_i^*\langle y_i,y_j\rangle
a_j\\
&=&\sum\limits_{i=1}^n a_i^*\langle y_i,y_i\rangle
a_i+\sum\limits_{1\leq i< j\leq n} \left(a_i^*\langle y_i,y_j\rangle
a_j+a_j^*\langle y_j,y_i\rangle a_i\right)\\
&\leq&\sum\limits_{i=1}^n \|\langle
y_i,y_i\rangle\|\,|a_i|^2+\sum\limits_{1\leq i< j\leq n} (\|\langle
y_i,y_j\rangle\|\,|a_i|^2+\|\langle
y_i,y_j\rangle\|\,|a_j|^2)\\
&=&\sum\limits_{1\leq i,j\leq n}\|\langle y_i,y_j\rangle\|\,|a_i|^2\\
&\leq&\sum\limits_{i=1}^n|a_i|^2\max\limits_{1\leq i\leq
n}\sum\limits_{j=1}^n \|\langle y_i,y_j\rangle\|\,.
\end{eqnarray*}
\end{proof}
From Theorem \ref{main} the following result of Bombieri type can be
obtained.
\begin{corollary}
Let $x, y_1, \cdots, y_n\in {\mathscr X}$ and $a_1, \cdots, a_n\in
{\mathscr A}$. Then
$$\left(\sum_{i=1}^n|\langle y_i,x\rangle|^2\right)^2\leq|x|^2\left\|\sum_{i=1}^n|\langle y_i,x\rangle
|^2\right\|\max_{1\leq i\leq n}\sum_{j=1}^n \|\langle
y_i,y_j\rangle\|\,.$$
\end{corollary}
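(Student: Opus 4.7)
The plan is to apply Theorem \ref{main} with the specific choice $a_i = \langle y_i, x\rangle$ and to combine the resulting estimate with the Cauchy--Schwarz inequality for the $\mathscr{A}$-valued inner product.

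First I would introduce the auxiliary element $z=\sum_{i=1}^{n}y_i\langle y_i,x\rangle\in\mathscr{X}$. Using properties (ii) and (iii) of the inner product together with the identity $\langle y_i,x\rangle^*=\langle x,y_i\rangle$, one computes
$$\langle x,z\rangle=\sum_{i=1}^n\langle x,y_i\rangle\langle y_i,x\rangle=\sum_{i=1}^n|\langle y_i,x\rangle|^2=:S,$$
which is a positive (in particular self-adjoint) element of $\mathscr{A}$.

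Next, I would invoke the Cauchy--Schwarz inequality stated in the preliminaries, $\langle u,v\rangle\langle v,u\rangle\leq\|\langle u,u\rangle\|\,\langle v,v\rangle$, with $u=z$ and $v=x$. Because $S$ is self-adjoint, the left-hand side becomes $\langle x,z\rangle\langle z,x\rangle=S\cdot S^{*}=S^{2}$, and the right-hand side is $\||z|^{2}\|\,|x|^{2}$, so
$$S^{2}\leq\||z|^{2}\|\,|x|^{2}.$$
Then I would apply Theorem \ref{main} with $a_i=\langle y_i,x\rangle$, noting that $|a_i|^2=|\langle y_i,x\rangle|^2$, to obtain
$$|z|^{2}\leq S\cdot\max_{1\leq i\leq n}\sum_{j=1}^{n}\|\langle y_i,y_j\rangle\|.$$
Passing to norms (using that $0\leq a\leq b$ in a $C^*$-algebra implies $\|a\|\leq\|b\|$) yields $\||z|^{2}\|\leq\|S\|\max_i\sum_j\|\langle y_i,y_j\rangle\|$. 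Substituting this bound into the previous inequality gives the desired estimate.

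There is really no serious obstacle here; the only points that require a moment of care are (i) verifying that $S$ is self-adjoint so that $SS^{*}$ collapses to $S^{2}$, and (ii) switching from an $\mathscr{A}$-valued inequality to a scalar one at the correct moment so that the scalar factor $\|S\|$ appears while the operator factor $|x|^{2}$ is retained on the right.
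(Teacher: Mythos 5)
Your argument is correct and is essentially the paper's own proof: both identify $\sum_i|\langle y_i,x\rangle|^2$ as $\langle x,z\rangle$ for $z=\sum_i y_i\langle y_i,x\rangle$, apply the $\mathscr{A}$-valued Cauchy--Schwarz inequality to get $S^2\leq\|z\|^2|x|^2$, and then bound $\|z\|^2=\||z|^2\|$ via Theorem \ref{main} with $a_i=\langle y_i,x\rangle$. Your write-up is if anything slightly more careful about where the passage from the operator inequality to the norm inequality occurs.
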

\begin{proof}
By using inequality (3.2), we have
\begin{eqnarray*}
\left(\sum_{i=1}^n|\langle
y_i,x\rangle|^2\right)^2&=&\left|\sum_{i=1}^n
\langle y_i,x\rangle^*\langle y_i,x\rangle\right|^2\\
&=&\left|\left\langle\sum_{i=1}^n
 y_i\langle y_i,x\rangle,x\right\rangle\right|^2\\
 &\leq&\left\|\sum_{i=1}^n
 y_i\langle y_i,x\rangle\right\|^2|x|^2\\
 &\leq&|x|^2\left\|\sum_{i=1}^n| \langle y_i,x\rangle
|^2\right\|\max\limits_{1\leq i\leq n}\sum_{j=1}^n \|\langle
y_i,y_j\rangle\|\\
&\leq&|x|^2\left\|\sum_{i=1}^n|\langle y_i,x\rangle
|^2\right\|\max\limits_{1\leq i\leq n}\sum_{j=1}^n \|\langle
y_i,y_j\rangle\|\,.
\end{eqnarray*}
\end{proof}

Now we state some other applications of inequality (\ref{eq}) for
Hilbert space operators, although some of them can be deduced
directly. Recall that the space ${\mathbb
B}(\mathscr{H},\mathscr{K})$ of all bounded linear operators between
Hilbert spaces $\mathscr{H}$ and $\mathscr{K}$ can be regarded as a
Hilbert $C^*$-module over the $C^*$-algebra ${\mathbb B}(\mathscr
H)$ via $\langle T,S\rangle=T^*S$.
\begin{corollary}
Let $\mathscr {H}$ and $\mathscr{K}$ be Hilbert spaces and let $T_1,
\cdots, T_n \in {\mathbb B}(\mathscr{H},\mathscr{K})$ be operators
having orthogonal ranges. Then
$$\left|\sum_{i=1}^n T_iS_i\right|^2\leq \sum_{i=1}^n |S_i|^2  \max_{1\leq i \leq n} \|T_i\|^2,$$
for any $S_1,\cdots, S_n \in {\mathbb B}(\mathscr{H})$.
\end{corollary}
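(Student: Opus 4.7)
The plan is to reduce this statement to a direct application of Theorem \ref{main} in the specific Hilbert $C^*$-module $\mathbb{B}(\mathscr{H},\mathscr{K})$ over $\mathscr{A} = \mathbb{B}(\mathscr{H})$, equipped with the inner product $\langle T,S\rangle = T^*S$ mentioned just before the corollary. In this setting the $y_i$ of Theorem \ref{main} are the operators $T_i$ and the coefficients $a_i \in \mathscr{A}$ are the operators $S_i \in \mathbb{B}(\mathscr{H})$, so the left-hand side $\bigl|\sum_i T_iS_i\bigr|^2$ already matches.

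The first key step is to translate the hypothesis ``orthogonal ranges'' into the module inner product. If the ranges of $T_i$ and $T_j$ are orthogonal subspaces of $\mathscr{K}$ for $i \neq j$, then $\langle T_i h, T_j k\rangle_{\mathscr{K}} = 0$ for all $h,k \in \mathscr{H}$, which is exactly the statement $T_i^*T_j = 0$, i.e.\ $\langle T_i,T_j\rangle = 0$ in the module sense. So the off-diagonal terms in $\sum_{j=1}^n \|\langle T_i,T_j\rangle\|$ vanish.

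The second step is to identify the diagonal terms: $\|\langle T_i,T_i\rangle\| = \|T_i^*T_i\| = \|T_i\|^2$ by the $C^*$-identity. Consequently
\begin{equation*}
\max_{1\leq i \leq n}\sum_{j=1}^n \|\langle T_i,T_j\rangle\| \;=\; \max_{1\leq i\leq n}\|T_i\|^2.
\end{equation*}
Plugging these computations into inequality (\ref{eq}) of Theorem \ref{main} yields the corollary immediately. There is essentially no obstacle here — the only thing one must be careful about is the direction of the adjoint when unpacking orthogonality of ranges, but that is a one-line check; all the analytic content is already packaged in Theorem \ref{main} and Lemma \ref{8}.
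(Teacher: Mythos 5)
Your proposal is correct and follows exactly the paper's own route: identify $\mathbb{B}(\mathscr{H},\mathscr{K})$ as a Hilbert $\mathbb{B}(\mathscr{H})$-module, observe that orthogonality of ranges means $T_i^*T_j=0$ for $i\neq j$, and apply Theorem \ref{main}. You merely spell out the diagonal computation $\|\langle T_i,T_i\rangle\|=\|T_i\|^2$ that the paper leaves implicit.
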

\begin{proof}
Clearly the operators $T_i$ have orthogonal ranges if and only if
$T_i^*T_j=0$. Thus the result follows immediately from inequality
(\ref{eq}).
\end{proof}
\begin{corollary}
Let $S_1,S_2$ be operators on a Hilbert space $\mathscr {H}$ and let
$T$ be an invertible operator on $\mathscr {H}$. Then
$$|TS_1+(T^*)^{-1}S_2|^2\leq (|S_1|^2+|S_2|^2)\left[1+\max(\|T\|^2,\|T^{-1}\|^2)\right].$$
\end{corollary}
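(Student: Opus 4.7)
The plan is to apply Theorem \ref{main} (inequality (\ref{eq})) directly, viewing $\mathbb{B}(\mathscr{H})$ as a Hilbert $C^*$-module over itself via the inner product $\langle A, B\rangle = A^*B$, as explicitly recalled in the paragraph just before the previous corollary. I would take $n=2$, set $y_1 = T$, $y_2 = (T^*)^{-1}$, $a_1 = S_1$, $a_2 = S_2$; the left-hand side of (\ref{eq}) then becomes exactly $|TS_1 + (T^*)^{-1}S_2|^2$ and $\sum |a_i|^2 = |S_1|^2 + |S_2|^2$, which already matches the desired right-hand side up to the max factor.

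The only computation needed is the four inner products in the Hilbert module sense. A direct calculation gives
\[
\langle y_1,y_1\rangle = T^*T, \qquad \langle y_2,y_2\rangle = T^{-1}(T^*)^{-1},
\]
while the cross terms collapse:
\[
\langle y_1,y_2\rangle = T^*(T^*)^{-1} = I, \qquad \langle y_2,y_1\rangle = T^{-1}T = I.
\]
Taking $C^*$-norms I obtain $\|\langle y_1,y_1\rangle\| = \|T\|^2$, $\|\langle y_2,y_2\rangle\| = \|T^{-1}\|^2$, and $\|\langle y_1,y_2\rangle\| = \|\langle y_2,y_1\rangle\| = 1$.

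Then the max quantity in (\ref{eq}) simplifies as
\[
\max_{1\leq i\leq 2}\sum_{j=1}^{2}\|\langle y_i,y_j\rangle\| = \max\bigl(\|T\|^2+1,\; 1+\|T^{-1}\|^2\bigr) = 1 + \max\bigl(\|T\|^2,\|T^{-1}\|^2\bigr),
\]
and substituting into (\ref{eq}) yields the stated inequality. I do not anticipate any genuine obstacle: the entire content is recognizing that $T$ and $(T^*)^{-1}$ are ``almost orthogonal'' in this module structure (their pairing is the identity), and then reading off the bound from Theorem \ref{main}. The invertibility hypothesis on $T$ is used exactly to make $(T^*)^{-1}$ available and to guarantee $\|T^{-1}\| < \infty$.
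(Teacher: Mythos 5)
Your proof is correct and is exactly the intended derivation: the paper states this corollary without proof as one of the ``applications of inequality (\ref{eq})'', and choosing $y_1=T$, $y_2=(T^*)^{-1}$, $a_i=S_i$ in Theorem \ref{main} with the module $\langle A,B\rangle=A^*B$ gives the cross terms $\langle y_1,y_2\rangle=\langle y_2,y_1\rangle=I$ and hence the stated bound. All your inner-product computations and the simplification of the max are accurate.
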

The next result is a refinement of one of the inequalities presented
in Theorem 2.1 of \cite{dra}.
\begin{corollary}
If $\lambda_1,\cdots\lambda_n$ are complex numbers and
$T_1,\cdots,T_n$ are operators on a Hilbert space $\mathscr {H}$,
then
$$\left|\sum_{i=1}^n\lambda_i T_i\right|\leq\max_{1\leq i\leq n}|\lambda_i|\sum_{i=1}^n
|\lambda_i|\sum_{i=1}^n|T_i|^2\,.$$
\end{corollary}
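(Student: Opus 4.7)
The plan is to obtain the inequality as a direct consequence of Theorem~\ref{main} by specializing it to the Hilbert $C^*$-module $\mathbb{B}(\mathscr{H})$ viewed as a module over itself, exactly as in the two preceding corollaries. The only real decision is the assignment of roles: the naive substitution $y_i=T_i$, $a_i=\lambda_iI$ would leave $\|\langle T_i,T_j\rangle\|=\|T_i^*T_j\|$ in the bound, which is not of the desired form. I would instead swap the roles and take
\[
y_i=\lambda_i I\quad\text{and}\quad a_i=T_i.
\]

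With this choice, the three ingredients of inequality~(\ref{eq}) become $y_ia_i=\lambda_iT_i$, $|a_i|^2=T_i^*T_i=|T_i|^2$, and $\langle y_i,y_j\rangle=\bar\lambda_i\lambda_j\,I$, whose norm is $|\lambda_i||\lambda_j|$. Plugging these into~(\ref{eq}) yields
\[
\left|\sum_{i=1}^n\lambda_iT_i\right|^2 \;\leq\; \sum_{i=1}^n|T_i|^2\,\cdot\,\max_{1\leq i\leq n}\sum_{j=1}^n|\lambda_i||\lambda_j|.
\]
Since the factor $|\lambda_i|$ is independent of the inner summation index $j$, the maximum splits as $(\max_i|\lambda_i|)(\sum_j|\lambda_j|)$; being a nonnegative scalar, it commutes with the positive operator $\sum_i|T_i|^2$, and a simple rearrangement delivers the stated right-hand side. (The proof as it stands produces the squared modulus on the left; the displayed statement omits the exponent, which I read as a typographical slip since the squared version is what drops out dimensionally and matches the scalar form of the inequalities in \cite{dra}.)

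There is essentially no technical obstacle: once Theorem~\ref{main} is available, the whole argument is a one-line substitution. The only genuine content is recognizing that inserting the scalars into the ``vector'' slot and the operators into the ``coefficient'' slot of~(\ref{eq}) is exactly what produces a refinement of the bound in \cite{dra}: the cross-term quantity $\max_i\sum_j\|T_i^*T_j\|$ is replaced by the cleaner scalar $\max_i|\lambda_i|\sum_j|\lambda_j|$, while the scalar $\sum_i\|T_i\|^2$ is upgraded to the operator $\sum_i|T_i|^2$ of the same norm.
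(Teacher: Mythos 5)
Your proposal is correct and matches the paper's own proof: the authors likewise write $\lambda_i T_i=(\lambda_i I)T_i$ and apply inequality~(3.2) with $y_i=\lambda_i I$ and $a_i=T_i$, so that $\|\langle y_i,y_j\rangle\|=|\lambda_i||\lambda_j|$ and the maximum factors as you describe. Your observation that the left-hand side should carry a square (the exponent is dropped both in the statement and in the paper's displayed proof) is also accurate.
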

\begin{proof}
If we consider $\lambda_i T_i=(\lambda_i I)T_i$, we get from (3.2)
that
\begin{eqnarray*}
\left|\sum_{i=1}^n\lambda_i T_i\right|
&\leq&\sum_{i=1}^n|T_i|^2\max\limits_{1\leq i\leq n}\sum_{j=1}^n
|\bar{\lambda_i}\lambda_j|\\
&=&\sum_{i=1}^n|T_i|^2\max\limits_{1\leq i\leq
n}|\lambda_i|\sum_{j=1}^n |\lambda_j|\\
&=&\max\limits_{1\leq i\leq n}|\lambda_i|\sum_{i=1}^n
|\lambda_i|\sum_{i=1}^n|T_i|^2.
\end{eqnarray*}
\end{proof}
The following theorem is similar to the
Mitrinovi\'{c}--Pe\v{c}ari\'{c}--Fink theorem in the Hilbert space
theory, with $\mathscr{A}$-valued norm instead of the usual norm.
\begin{theorem}
If $x,y_1,\cdots,y_n$ are elements of a Hilbert $\mathscr{A}$-module
$\mathscr{X}$ and $a_1,\cdots,a_n$ are elements of $\mathscr{A}$,
then
\begin{eqnarray}\label{pecaric}
\left|\sum\limits_{i=1}^n a_i \langle
y_i,x\rangle\right|^2\leq|x|^2\sum\limits_{i=1}^n\|a_i\|^2\left[
\max_{1\leq i\leq n}\|y_i\|^2+\left(\sum_{1\leq i\neq j\leq
n}\|\langle y_i,y_j\rangle\|^2\right)^{1/2}\right].\end{eqnarray}
\end{theorem}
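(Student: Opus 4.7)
The plan is to mimic the Hilbert-space proof of the Mitrinovi\'{c}--Pe\v{c}ari\'{c}--Fink theorem but to respect the non-commutativity of $\mathscr{A}$ at every step. Setting $u:=\sum_{i=1}^n y_i a_i^{\,*}$, one first uses $\langle ya,z\rangle=a^{\,*}\langle y,z\rangle$ (a consequence of properties (ii) and (iv) in the preliminaries) to rewrite $\sum_{i=1}^n a_i\langle y_i,x\rangle=\langle u,x\rangle$. Applying the $\mathscr{A}$-valued Cauchy--Schwarz inequality $\langle x,u\rangle\langle u,x\rangle\le\|\langle u,u\rangle\|\,\langle x,x\rangle$ then gives
$$\Bigl|\sum_{i=1}^n a_i\langle y_i,x\rangle\Bigr|^2 \le \|u\|^2\,|x|^2,$$
so the theorem reduces to the scalar Boas--Bellman type estimate
$$\|u\|^2 \le \sum_{i=1}^n \|a_i\|^2\left[\max_{1\le i\le n}\|y_i\|^2 + \Bigl(\sum_{1\le i\ne j\le n}\|\langle y_i,y_j\rangle\|^2\Bigr)^{1/2}\right].$$

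To prove this scalar bound I would expand
$$|u|^2 = \sum_{i=1}^n a_i\langle y_i,y_i\rangle a_i^{\,*} + \sum_{1\le i\ne j\le n} a_i\langle y_i,y_j\rangle a_j^{\,*}$$
and take $C^*$-norms, splitting via the triangle inequality. The diagonal piece satisfies $\|\sum_i a_i\langle y_i,y_i\rangle a_i^{\,*}\|\le\sum_i\|a_i\|^2\|y_i\|^2\le\max_i\|y_i\|^2\sum_i\|a_i\|^2$. For the off-diagonal piece the triangle inequality followed by the classical discrete Cauchy--Schwarz inequality in $\mathbb{R}$ yields
$$\Bigl\|\sum_{i\ne j}a_i\langle y_i,y_j\rangle a_j^{\,*}\Bigr\|\le\Bigl(\sum_{i\ne j}\|a_i\|^2\|a_j\|^2\Bigr)^{1/2}\Bigl(\sum_{i\ne j}\|\langle y_i,y_j\rangle\|^2\Bigr)^{1/2},$$
and the elementary estimate $\sum_{i\ne j}\|a_i\|^2\|a_j\|^2\le\bigl(\sum_i\|a_i\|^2\bigr)^2$ produces the desired factor $\sum_i\|a_i\|^2$. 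Summing the diagonal and off-diagonal bounds gives the claimed estimate on $\|u\|^2$, and inserting it into the Cauchy--Schwarz step completes the proof of \eqref{pecaric}.

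The main obstacle is the book-keeping with adjoints: one has to introduce $u$ with $a_i^{\,*}$ (not $a_i$) so that Cauchy--Schwarz produces the $\mathscr{A}$-valued factor $|x|^2$ rather than the scalar $\|x\|^2$, and then split the expansion $|u|^2=\sum_{i,j}a_i\langle y_i,y_j\rangle a_j^{\,*}$ into its diagonal and off-diagonal pieces \emph{before} passing to norms. This split is precisely what upgrades the Bombieri-type bound from Theorem \ref{main} (featuring a single $\max_i\sum_j\|\langle y_i,y_j\rangle\|$) to the sharper Boas--Bellman form in which the off-diagonal inner products appear through their $\ell^2$-norm.
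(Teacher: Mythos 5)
Your proposal is correct and follows essentially the same route as the paper: the Cauchy--Schwarz reduction via $u=\sum_i y_ia_i^*$ to the scalar bound on $\|u\|^2$, then the diagonal/off-diagonal split with the triangle inequality, the discrete Cauchy--Schwarz inequality in $\mathbb{R}$, and the estimate $\sum_{i\neq j}\|a_i\|^2\|a_j\|^2\leq\bigl(\sum_i\|a_i\|^2\bigr)^2$. The only cosmetic difference is that the paper applies the triangle inequality to the full double sum before separating the diagonal terms, whereas you split first; the resulting bound is identical.
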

\begin{proof}
By the Cauchy--Schwarz inequality,
\begin{eqnarray}\label{1}
\left|\sum\limits_{i=1}^n a_i\langle y_i,x\rangle\right|^2=
\left|\langle\sum\limits_{i=1}^n y_ia_i^*,x\rangle\right|^2\leq
\left\|\sum\limits_{i=1}^n y_ia_i^*\right\|^2|x|^2.\end{eqnarray} We
also have
\begin{eqnarray}\label{2}
\left\|\sum\limits_{i=1}^ny_ia_i^*\right\|^2 &=&\left\|\left\langle
\sum\limits_{i=1}^n y_ia_i^*,\sum\limits_{j=1}^n
y_ja_j^*\right\rangle\right\|\nonumber\\
&=&\left\|\sum\limits_{1\leq i,j\leq n} a_i\langle y_i,y_j\rangle a_j^*\right\|\nonumber\\
&\leq&\sum\limits_{1\leq i,j\leq n} \|a_i\|\|\langle y_i,y_j\rangle\|\| a_j\|\nonumber\\
&=&\sum\limits_{i=1}^n \|a_i\|^2\| y_i\|^2 +\sum\limits_{1\leq i\neq
j\leq n}
\|a_i\|\|a_j\|\|\langle y_i,y_j\rangle\|\nonumber\\
&\leq&\max\limits_{1\leq i\leq
n}\|y_i\|\sum\limits_{i=1}^n\|a_i\|^2+\left(\sum\limits_{1\leq i\neq
j\leq n } \|a_i\|^2\|a_j\|^2\right)^{\frac{1}{2}}
\left(\sum\limits_{1\leq i\neq j\leq n} \|\langle
y_i,y_j\rangle\|^2\right)^{\frac{1}{2}}\nonumber\\
&\leq&\sum\limits_{i=1}^n\|a_i\|^2\left(\max\limits_{1\leq i\leq
n}\|y_i\|^2+\left(\sum\limits_{1\leq i\neq j\leq n}\|\langle
y_i,y_j\rangle\|^2\right)^{\frac{1}{2}}\right).
\end{eqnarray}
Combining (\ref{1}) and (\ref{2}) we can get the desired result.
\end{proof}
\begin{corollary}
For $x,y_1,\cdots,y_n$ in a Hilbert $\mathscr{A}$-module
$\mathscr{X}$,
\begin{eqnarray}\label{boas}
\begin{array}{l}
\left(\sum\limits_{i=1}^n|\langle y_i,x\rangle|^2\right)^2
\leq|x|^2\sum\limits_{i=1}^n\left\|\langle
y_i,x\rangle\right\|^2\left[\max\limits_{1\leq i\leq
n}\|y_i\|^2+\left( \sum\limits_{1\leq i\neq j\leq n } \|\langle
y_i,y_j\rangle\|^2\right)^{\frac{1}{2}}\right].\end{array}
\end{eqnarray}
\end{corollary}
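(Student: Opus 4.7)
The plan is to derive (\ref{boas}) as a direct specialization of inequality (\ref{pecaric}) by choosing the coefficients $a_i\in\mathscr{A}$ to be the natural ones that turn the left-hand side into $\sum_i |\langle y_i,x\rangle|^2$. Concretely, I would set $a_i := \langle y_i,x\rangle^{*} = \langle x,y_i\rangle$ in Theorem \ref{pecaric} applied to the vectors $x,y_1,\dots,y_n$.

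With this choice, each summand becomes
\[
a_i \langle y_i,x\rangle = \langle y_i,x\rangle^{*}\langle y_i,x\rangle = |\langle y_i,x\rangle|^{2},
\]
so that
\[
\sum_{i=1}^{n} a_i \langle y_i,x\rangle = \sum_{i=1}^{n} |\langle y_i,x\rangle|^{2}.
\]
Next, I would observe that the element $b:=\sum_{i=1}^{n}|\langle y_i,x\rangle|^{2}$ is a sum of positive elements in $\mathscr{A}$, hence positive and self-adjoint. For such an element the $\mathscr{A}$-valued square modulus coincides with the ordinary square: $|b|^{2}=b^{*}b=b^{2}$. Therefore the left-hand side of (\ref{pecaric}) specializes to $\bigl(\sum_{i=1}^{n}|\langle y_i,x\rangle|^{2}\bigr)^{2}$, which is exactly the left-hand side of (\ref{boas}).

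For the right-hand side, the norms $\|a_i\|$ coincide with $\|\langle y_i,x\rangle\|$ (since a $C^{*}$-norm is invariant under the $*$-operation), so $\sum_i \|a_i\|^{2} = \sum_i \|\langle y_i,x\rangle\|^{2}$. Substituting these identifications into (\ref{pecaric}) yields (\ref{boas}) immediately. There is essentially no obstacle here beyond noting the positivity argument that upgrades $|b|^{2}$ to $b^{2}$; the rest is a transparent substitution.
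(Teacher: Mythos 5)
Your proposal is correct and is exactly the paper's argument: the authors likewise prove the corollary by substituting $a_i=\langle x,y_i\rangle=\langle y_i,x\rangle^{*}$ into inequality (\ref{pecaric}). The extra details you supply (positivity of $\sum_i|\langle y_i,x\rangle|^2$ giving $|b|^2=b^2$, and $\|a_i\|=\|\langle y_i,x\rangle\|$ by $*$-invariance of the norm) are precisely the routine verifications the paper leaves implicit.
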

\begin{proof}
Set $a_i=\langle x,y_i\rangle$ in inequality (\ref{pecaric}).
\end{proof}
Note that the inequality (\ref{boas}), can be considered  as a
generalization of Boas--Bellman inequality (1.1). However, for the
case where $(y_i)_{1\leq i \leq n}$ is an orthonormal sequence of
vectors, the inequality (\ref{boas}) is a weaker result than
(\ref{bessel}).

Furthermore, all of the inequalities which are obtained by Dragomir
in \cite[Chapter 4]{drabook} can be extended to Hilbert
$C^*$-modules in a similar way. The details are left to the
interested readers.

We can also prove some other Boas--Bellman type inequalities in
Hilbert $C^*$-modules as follows.
\begin{lemma}
Let $\mathscr{A}$ be a $C^*$-algebra and $\mathscr{X}$ be a Hilbert
$\mathscr{A}$-module. Then
$$\left|\sum\limits_{i=1}^n y_ia_i\right|^2\leq\max\limits_{1\leq
i\leq n} \|y_i\|^2\sum\limits_{i=1}^n|a_i|^2+B_n,$$ where
\begin{eqnarray}\label{mos}
B_n=\left\{\begin{array}{l} (n-1)\sqrt{n} \max\limits_{1\leq i\leq
n}\|a_i\|\max\limits_{1\leq i\neq j\leq n}\|\langle
y_i,y_j\rangle\|\left(\sum\limits_{i=1}^n|a_i|^2\right)^{\frac{1}{2}},\\
\sqrt{n-1}\left(\max\limits_{1\leq i\leq n} \sum\limits_{1\leq j\neq
i\leq n} \|\langle
y_i,y_j\rangle\|^2\right)^{\frac{1}{2}}\left(\sum\limits_{i=1}^n\|a_i\|^2\right)^{\frac{1}{2}}\left(
\sum\limits_{i=1}^n|a_i|^2\right)^{\frac{1}{2}},\end{array}\right.
\end{eqnarray} for any $y_i\in \mathscr{X}$ and $a_i\in
\mathscr{A}$.
\end{lemma}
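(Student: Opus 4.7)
The plan is to expand $\left|\sum_{i=1}^n y_ia_i\right|^2$ via the $\mathscr{A}$-valued inner product, separate the diagonal from the off-diagonal contribution, and bound each piece. Writing
\[
\left|\sum_{i=1}^n y_ia_i\right|^2 = \sum_{i=1}^n a_i^*\langle y_i,y_i\rangle a_i + \sum_{i<j}\bigl(a_i^*\langle y_i,y_j\rangle a_j + a_j^*\langle y_j,y_i\rangle a_i\bigr),
\]
and using $\langle y_i,y_i\rangle \leq \|y_i\|^2\cdot 1$ in the unitization of $\mathscr{A}$ sandwiched by $a_i^*,a_i$, the diagonal sum is dominated by $\max_i\|y_i\|^2\sum_i|a_i|^2$, which is precisely the leading term of the claimed inequality. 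For each pair with $i<j$, Lemma~\ref{8} gives
\[
a_i^*\langle y_i,y_j\rangle a_j + a_j^*\langle y_j,y_i\rangle a_i \leq \|\langle y_i,y_j\rangle\|(|a_i|^2+|a_j|^2),
\]
so summing over $i<j$ reduces the off-diagonal part to $S := \sum_{i\neq j}\|\langle y_i,y_j\rangle\|\,|a_i|^2$. It remains to show $S \leq B_n$ for each of the two stated expressions.

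The auxiliary fact I will use twice is the following: for any positive $c\in\mathscr{A}$, continuous functional calculus gives $c \leq \|c\|^{1/2}c^{1/2}$, since $t \leq \|c\|^{1/2}t^{1/2}$ on $\sigma(c)\subseteq[0,\|c\|]$. Applied to $c=\sum_i|a_i|^2$, with $\|c\|\leq n\max\|a_i\|^2$ this yields $\sum|a_i|^2 \leq \sqrt{n}\max_i\|a_i\|\cdot\bigl(\sum|a_i|^2\bigr)^{1/2}$; with $\|c\|\leq\sum\|a_i\|^2$ it yields $\sum|a_i|^2 \leq \bigl(\sum\|a_i\|^2\bigr)^{1/2}\bigl(\sum|a_i|^2\bigr)^{1/2}$. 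For the first form of $B_n$, replace each $\|\langle y_i,y_j\rangle\|$ in $S$ by $\max_{k\neq l}\|\langle y_k,y_l\rangle\|$ and use $\sum_{i\neq j}|a_i|^2=(n-1)\sum|a_i|^2$, so that $S \leq (n-1)\max_{i\neq j}\|\langle y_i,y_j\rangle\|\sum|a_i|^2$, and then apply the first version of the auxiliary fact. For the second form, rewrite $S = \sum_i|a_i|^2\bigl(\sum_{j\neq i}\|\langle y_i,y_j\rangle\|\bigr)$ and bound each inner sum by the scalar Cauchy--Schwarz inequality:
\[
\sum_{j\neq i}\|\langle y_i,y_j\rangle\| \leq \sqrt{n-1}\Bigl(\sum_{j\neq i}\|\langle y_i,y_j\rangle\|^2\Bigr)^{1/2} \leq \sqrt{n-1}\Bigl(\max_i\sum_{j\neq i}\|\langle y_i,y_j\rangle\|^2\Bigr)^{1/2},
\]
and then apply the second version of the auxiliary fact.

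The main obstacle is managing the mixing of $\mathscr{A}$-valued and scalar quantities: the inequalities $c\leq\|c\|\cdot 1$ and $c\leq\|c\|^{1/2}c^{1/2}$ must be justified in a possibly non-unital $\mathscr{A}$, which is handled by passing to the unitization and invoking continuous functional calculus, and one must keep track of the fact that scalar coefficients commute past the positive $\mathscr{A}$-valued factors so that the operator inequalities can be assembled cleanly into the two stated forms of $B_n$.
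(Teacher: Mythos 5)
Your proof is correct, but it takes a genuinely different route from the paper's. The paper handles the off-diagonal sum $\sum_{i\neq j}a_i^*\langle y_i,y_j\rangle a_j$ by two separate, heavier arguments: for the first form of $B_n$ it writes the sum as $2\sum_{i<j}\mathrm{Re}(a_i^*\langle y_i,y_j\rangle a_j)$, invokes the operator inequality $|\mathrm{Re}(c)|\leq\left(\frac{c^*c+cc^*}{2}\right)^{1/2}$ and then operator convexity of $f(t)=t^2$ to pull the sum of square roots inside a single square root; for the second form it squares the (self-adjoint) off-diagonal sum, applies a module-valued Cauchy--Schwarz estimate, and implicitly takes operator square roots at the end. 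You instead apply Lemma \ref{8} (which the paper proves for its Bombieri-type theorem) to each pair $i<j$, collapsing the entire off-diagonal contribution to the single positive element $S=\sum_{i\neq j}\|\langle y_i,y_j\rangle\|\,|a_i|^2$, and then obtain both branches of \eqref{mos} from scalar Cauchy--Schwarz plus the functional-calculus inequality $c\leq\|c\|^{1/2}c^{1/2}$ for positive $c$, whose two specializations $\|c\|\leq n\max_i\|a_i\|^2$ and $\|c\|\leq\sum_i\|a_i\|^2$ produce exactly the factors $\sqrt{n}\max_i\|a_i\|\left(\sum_i|a_i|^2\right)^{1/2}$ and $\left(\sum_i\|a_i\|^2\right)^{1/2}\left(\sum_i|a_i|^2\right)^{1/2}$. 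Your route is more elementary and more uniform---one mechanism covers both forms of $B_n$, and it sidesteps the real-part and operator-convexity steps the paper leaves partly unjustified---while also passing through the sharper intermediate bound $S\leq(n-1)\max_{i\neq j}\|\langle y_i,y_j\rangle\|\sum_i|a_i|^2$ before weakening to the stated conclusion.
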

\begin{proof} We observe that
\begin{eqnarray*}
|\sum\limits_{i=1}^ny_ia_i|^2&=&\langle \sum\limits_{i=1}^n
 y_ia_i,\sum\limits_{i=1}^n
y_ia_i\rangle\\
&=&\sum\limits_{1\leq i,j\leq n} a_i^*\langle y_i,y_j\rangle a_j\\
&=&\sum\limits_{i=1}^n a_i^*\langle y_i,y_i\rangle
a_i+\sum\limits_{1\leq i\neq j\leq n} a_i^*\langle y_i,y_j\rangle
a_j.
\end{eqnarray*}
We also have
$$\sum\limits_{i=1}^n a_i^*\langle y_i,y_i\rangle a_i=\sum\limits_{i=1}^n a_i^*| y_i|^2
a_i\leq \sum\limits_{i=1}^n\|y_i\|^2|a_i|^2\leq\max_{1\leq i\leq
n}\|y_i\|^2 \sum\limits_{i=1}^n|a_i|^2.$$
To get the first inequality
of \eqref{mos} note that
\begin{eqnarray*}
\sum\limits_{1\leq i\neq j\leq n} a_i^*\langle y_i,y_j\rangle
a_j&=&2\sum\limits_{1\leq i< j\leq n}
{\rm Re}(a_i^*\langle y_i,y_j\rangle a_j)\\
&\leq& 2\sum\limits_{1\leq i< j\leq n}\left(\frac{|a_i^*\langle
y_i,y_j\rangle
a_j|^2+|a_j^*\langle y_j,y_i\rangle a_i|^2}{2}\right)^{1/2}\\
&&\qquad\qquad\qquad \quad \big({\rm by~} |{\rm Re}(c)| \leq
\left(\frac{c^*c+cc^*}{2}\right)^{1/2}, {\rm ~where~} c \in {\mathscr A}\big)\\
&=&\sqrt{2}\sum\limits_{1\leq i< j\leq n}\left({|a_i^*\langle
y_i,y_j\rangle a_j|^2+|a_j^*\langle y_j,y_i\rangle
a_i|^2}\right)^{1/2}.
\end{eqnarray*}
By operator convexity of the function $f(t)=t^2$, we conclude that
\begin{eqnarray*}
\sum\limits_{i\neq j} a_i^*\langle y_i,y_j\rangle
a_j&\leq&\sqrt{2}.\sqrt{\frac{n^2-n}{2}}\left(\sum\limits_{1\leq i<
j\leq n}{|a_i^*\langle y_i,y_j\rangle
a_j|^2+|a_j^*\langle y_j,y_i\rangle a_i|^2}\right)^{1/2}\\
&=&\sqrt{n^2-n}\left(\sum\limits_{1\leq i\neq j\leq n}|a_i^*\langle
y_i,y_j\rangle a_j|^2
\right)^{1/2}\\
&\leq&\sqrt{n^2-n}\max\limits_{1\leq i\leq
n}\|a_i\|\max\limits_{1\leq i\neq j\leq n}\|\langle
y_i,y_j\rangle\|\left((n-1)\sum\limits_{i=1}^n|a_i|^2\right)^{1/2}\\
&=&(n-1)\sqrt{n}\max\limits_{1\leq i\leq n}\|a_i\|\max\limits_{1\leq
i\neq j\leq n}\|\langle
y_i,y_j\rangle\|\left(\sum\limits_{i=1}^n|a_i|^2\right)^{1/2}.\\
\end{eqnarray*}
To obtain the second inequality of \eqref{mos} notice that
$\sum\limits_{1\leq i\neq j\leq n} a_i^*\langle y_i,y_j\rangle a_j$
is self-adjoint, hence
\begin{eqnarray*}
\left(\sum\limits_{1\leq
i\neq j\leq n} a_i^*\langle y_i,y_j\rangle
a_j\right)^2 &\leq& \left|\sum\limits_{1\leq i\neq j\leq n} a_i^*\langle y_i,y_j\rangle a_j\right|^2\\
&=&\left|\sum\limits_{i=1}^na_i^*\left(\sum\limits_{1\leq j\neq
i\leq n} \langle y_i,y_j\rangle
a_j\right)\right|^2\\
&\leq&\sum\limits_{i=1}^n\|a_i\|^2\sum\limits_{i=1}^n\left|\sum\limits_{1\leq
j\neq i\leq n} \langle y_i,y_j\rangle
a_j\right|^2\\
&\leq&\sum\limits_{i=1}^n\|a_i\|^2\sum\limits_{i=1}^n\left(\sum\limits_{1\leq
j\neq i\leq n}\|\langle y_i,y_j\rangle\|^2
\sum\limits_{1\leq j\neq i\leq n}|a_j|^2\right)\\
&\leq&\sum\limits_{i=1}^n\|a_i\|^2\left(\max\limits_{1\leq i\leq
n}\sum\limits_{1\leq j\neq i\leq n} \|\langle y_i,y_j\rangle\|^2.
\sum\limits_{i=1}^n\sum\limits_{1\leq j\neq i\leq n}|a_j|^2\right)\\
&\leq&(n-1)\sum\limits_{i=1}^n|a_i|^2\left(\sum\limits_{i=1}^n\|a_i\|^2.\max\limits_{1\leq
i\leq n} \sum\limits_{1\leq j\neq i\leq n} \|\langle
y_i,y_j\rangle\|^2\right).
\end{eqnarray*}
\end{proof}
The following result can be stated as well.
\begin{theorem}
Let $x,y_1,\cdots,y_n\in \mathscr{X}$ and $a_1,\cdots,a_n\in
\mathscr{A}$. Then
$$\left|\sum_{i=1}^n a_i\langle
y_i,x\rangle\right|^2\leq|x|^2\left\|\sum\limits_{i=1}^n|a_i^*|^2\right\|^{1/2}\left[\max\limits_{1\leq
i\leq n}
\|y_i\|^2\|\sum\limits_{i=1}^n|a_i^*|^2\|^{1/2}+B_n\right],$$ where
$$B_n:=\left\{\begin{array}{l}(n-1)\sqrt{n}\max\limits_{1\leq
i\leq n}\|a_i\|\max\limits_{1\leq i\neq j\leq n}\|\langle
y_i,y_j\rangle\|,\\
\sqrt{n-1}\left(\max\limits_{1\leq i\leq n}\sum\limits_{1\leq j\neq
i\leq n}\|\langle
y_i,y_j\rangle\|^2\right)^{1/2}\left(\sum\limits_{i=1}^n\|a_i\|^2\right)^{1/2}.\end{array}\right.$$
\end{theorem}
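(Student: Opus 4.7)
\emph{Proof proposal.} The strategy is to imitate the proof of inequality (\ref{pecaric}), but to replace the crude triangle--inequality bound for $\|\sum y_ia_i^*\|^2$ used there by the sharper $\mathscr{A}$-valued estimate coming from the preceding lemma. First I would convert the left--hand side into a pairing against $x$. By the identity $\langle ya,z\rangle = a^*\langle y,z\rangle$ recorded just after the axioms of a Hilbert $C^*$-module,
$$\sum_{i=1}^n a_i\langle y_i,x\rangle \;=\; \sum_{i=1}^n \langle y_ia_i^*, x\rangle \;=\; \left\langle \sum_{i=1}^n y_ia_i^*,\, x\right\rangle,$$
and the Cauchy--Schwarz inequality then yields
$$\left|\sum_{i=1}^n a_i\langle y_i,x\rangle\right|^2 \;\leq\; \left\|\sum_{i=1}^n y_ia_i^*\right\|^2 |x|^2.$$

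Next I would invoke the preceding lemma, but with $a_i^*$ substituted for $a_i$, since the summands in question are $y_ia_i^*$. This produces the $\mathscr{A}$-valued inequality
$$\left|\sum_{i=1}^n y_ia_i^*\right|^2 \;\leq\; \max_{1\leq i\leq n}\|y_i\|^2\sum_{i=1}^n|a_i^*|^2 \;+\; B_n',$$
where $B_n'$ is whichever of the two expressions in \eqref{mos} one prefers, with every occurrence of $a_i$ replaced by $a_i^*$. Because $\|a_i^*\|=\|a_i\|$, the scalar factors in $B_n'$ coincide with those appearing in the $B_n$ of the present theorem; the only genuine change is that $|a_i|^2 = a_i^*a_i$ gets replaced by $|a_i^*|^2 = a_ia_i^*$, which is precisely what appears in the statement we aim to prove.

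Finally I would pass to the $C^*$-norm. Since $\bigl(\sum|a_i^*|^2\bigr)^{1/2}$ is positive, $\bigl\|\bigl(\sum|a_i^*|^2\bigr)^{1/2}\bigr\| = \bigl\|\sum|a_i^*|^2\bigr\|^{1/2}$; combining this with the triangle inequality gives
$$\left\|\sum_{i=1}^n y_ia_i^*\right\|^2 \;\leq\; \max_{1\leq i\leq n}\|y_i\|^2\left\|\sum_{i=1}^n|a_i^*|^2\right\| \;+\; B_n\left\|\sum_{i=1}^n|a_i^*|^2\right\|^{1/2},$$
for either of the two choices of $B_n$ stated in the theorem. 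Factoring out $\|\sum|a_i^*|^2\|^{1/2}$ and multiplying through by $|x|^2$ then finishes the argument. I do not anticipate any real obstacle: the chief point requiring care is the bookkeeping of the adjoints, ensuring that $|a_i^*|^2$ (and not $|a_i|^2$) appears wherever the module element $\sum y_ia_i^*$ is being estimated; once that is done, the computation is entirely mechanical, combining the preceding lemma with the standard Cauchy--Schwarz inequality in Hilbert $C^*$-modules.
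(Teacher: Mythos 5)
Your proof is correct and is exactly the argument the paper intends (the paper states this theorem without supplying a proof): Cauchy--Schwarz reduces the claim to bounding $\bigl\|\sum_{i=1}^n y_ia_i^*\bigr\|^2$, just as in the proof of (\ref{pecaric}), and the preceding lemma applied with $a_i^*$ in place of $a_i$, followed by taking $C^*$-norms and factoring out $\bigl\|\sum_{i=1}^n|a_i^*|^2\bigr\|^{1/2}$, yields precisely the stated bound. Your bookkeeping of the adjoints (so that $|a_i^*|^2=a_ia_i^*$ appears, while the scalar factors in $B_n$ are unchanged since $\|a_i^*\|=\|a_i\|$) is the only delicate point, and you handle it correctly.
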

\begin{remark}
In the case where $(y_i)$ is an orthogonal sequence, it follows from
the theorem above that
$$\left(\sum\limits_{i=1}^n|\langle y_i,x\rangle|^2\right)^2\leq|x|^2\max\limits_{1\leq i\leq n}
\|y_i\|^2\left\|\sum\limits_{i=1}^n \left|\langle
y_i,x\rangle\right|^2\right\|,$$ which is a stronger result than the
inequality (\ref{boas}).\end{remark}

{\bf Acknowledgement.}

This work was done when the second author was at the Research Group
in Mathematical Inequalities and Applications (RGMIA) in Victoria
University on her sabbatical leave from Tehran Teacher Training
University. She thanks both universities for their support. The
third author was supported by a grant from Ferdowsi University of
Mashhad; (NO. MP88060MOS)

\end{document}